\newtheorem{thm}{Theorem}[section]
\newtheorem{defi}[thm]{Definition}
\newcommand{\donothing}[1]{}
\newcommand{\exclude}[1]{}
\begin{document}

\selectlanguage{british}

\title{A Short Note on the Pell-Lucas-Eisenstein Series}
\author{Mine Uysal \footnote{Department of Mathematics, Erzincan Binali Yıldırım University, Faculty of Arts and Sciences, Erzincan, Turkey, mine.uysal@erzincan.edu.tr}, Ilker Inam \footnote{Bilecik Seyh Edebali University, Department of Mathematics, Faculty of Arts and Sciences, 11200 Bilecik, Turkey, ilker.inam@bilecik.edu.tr}, and Engin \"{O}zkan \footnote{Department of Mathematics, Erzincan Binali Yıldırım University, Faculty of Arts and Sciences, Erzincan, Turkey eozkanmath@gmail.com or eozkan@erzincan.edu.tr}}
\maketitle

\begin{abstract}
In this work, we define a new type of Eisenstein-like series by using Pell-Lucas numbers and call them the Pell-Lucas-Eisenstein Series. Firstly, we show that the Pell-Lucas-Eisenstein series are convergent on their domain. Afterwards we prove that they satisfy some certain functional equations. Proofs follows from some on calculations on Pell-Lucas numbers.

\textbf{MSC (2010):} Primary: 11P37, secondary: 11P81, 11P83   \\
\textbf{Keywords:} Eisenstein series, Pell-Lucas numbers, modular forms, integer partitions.
\end{abstract}

\section{Introduction and Statement of the Results}

Eisenstein series play a fundamental theory role in the theory of modular forms which brings different branches of mathematics together. More precisely, they serve as important examples with coefficients that are easy to compute even with a very high number of coefficients is requested. For example, in \cite{4}, Eisenstein series are effectively used for obtaining Hecke eigenforms of half-integral weight by Rankin-Cohen brackets. For more details on modular forms see \cite{3} and \cite{8}.

In a recent work in \cite{6}, Matthew Just and Robert Schneider define an Eisenstein-like series summed over integer partitions, and use it to construct families of "semi-modular forms". As a continuation work, in \cite{2}, replacing integer partitions by Fibonacci numbers, Agbolade P. Akande and Robert Schneider construct a family of Eisenstein-like series to produce semi-modular forms as well as Lucas sequences.

Naturally, our inspiration for this work is coming from looking for further examples on partition Eisenstein series. In a recent paper \cite{5}, we define the Pell-Eisenstein Series and give their properties. In this short note, as a follow up work, we define the Pell-Lucas-Eisenstein series.

\begin{defi}
The $n$th-Pell-Lucas number is denoted by $Q_n$ and defined as $Q_0=2$ and $Q_1=2$ and for $n \geq 3$
$$ Q_n = 2 Q_{n-1} + Q_{n-2} .$$
\end{defi}
The reader is referred to \cite{7} for details about Pell-Lucas numbers. A recent work on the topic can be seen at \cite{3}.

\begin{defi}
The Pell-Lucas-Eisenstein Series of weight $m$ is denoted by $Q_m(z)$ defined as

$$ Q_m(z):= \sum_{j=-\infty}^{\infty} (Q_j z+ Q_{j-1})^{-m}  $$
\end{defi}

First of all, we should check whether $Q_m(z)$ is well-defined. 
\begin{thm}
For $m>1$, $Q_m(z)$ Pell-Lucas-Eisenstein series are convergent on its domain.
\end{thm}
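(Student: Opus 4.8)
\medskip
\noindent\textbf{Proof idea.}
The plan is to reduce the question to a comparison with a geometric series, using the Binet-type closed form for the Pell--Lucas numbers. Let $\delta = 1+\sqrt 2$ and $\bar\delta = 1-\sqrt 2$ be the roots of $x^2 - 2x - 1$, so that $\delta\bar\delta = -1$ and $|\bar\delta| = \delta^{-1} < 1 < \delta$. Solving the recurrence with the given initial data gives $Q_n = \delta^{\,n} + \bar\delta^{\,n}$ for all $n \in \ZZ$, where for $n<0$ one simply reads the defining recurrence backwards; note $\bar\delta^{\,n} = (-1)^n\delta^{-n}$, so $|Q_n|$ grows like $\delta^{|n|}$ in \emph{both} directions. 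Substituting the closed form yields the factorisation
\[
 Q_j z + Q_{j-1} \;=\; \delta^{\,j-1}(\delta z + 1) + \bar\delta^{\,j-1}(\bar\delta z + 1),
\]
and a one-line computation shows $\delta z + 1 = 0 \iff z = \bar\delta$ and $\bar\delta z + 1 = 0 \iff z = \delta$. Since $\delta,\bar\delta \in \RR$, on the domain of $Q_m$ (in particular on the upper half-plane $\HH$, which avoids these two points and on which each denominator $Q_j z + Q_{j-1} = Q_j\bigl(z + Q_{j-1}/Q_j\bigr)$ is nonzero, $Q_j$ being nonzero for every $j$) both of the constants $\delta z + 1$ and $\bar\delta z + 1$ are nonzero.

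Next I would estimate the two tails of the series separately. As $j \to +\infty$ the summand $\delta^{\,j-1}(\delta z+1)$ dominates, because
\[
 \frac{\bigl|\bar\delta^{\,j-1}(\bar\delta z+1)\bigr|}{\bigl|\delta^{\,j-1}(\delta z+1)\bigr|} \;=\; \delta^{-2(j-1)}\,\frac{|\bar\delta z+1|}{|\delta z+1|} \;\longrightarrow\; 0 ,
\]
so there is an index $J_+$ (depending on $z$) with $|Q_j z + Q_{j-1}| \ge \tfrac12|\delta z+1|\,\delta^{\,j-1}$, hence $|Q_j z + Q_{j-1}|^{-m} \le 2^m|\delta z+1|^{-m}\,\delta^{-m(j-1)}$, for all $j \ge J_+$. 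As $j \to -\infty$ the roles reverse: now $|\bar\delta^{\,j-1}| = \delta^{\,1-j} \to \infty$ outgrows $|\delta^{\,j-1}| = \delta^{\,j-1} \to 0$, and the same argument produces $J_-$ with $|Q_j z + Q_{j-1}|^{-m} \le 2^m|\bar\delta z+1|^{-m}\,\delta^{-m(1-j)}$ for all $j \le J_-$. Since $\delta > 1$, both bounds are general terms of convergent geometric series, while the finitely many remaining indices $J_- < j < J_+$ contribute a finite sum because no denominator vanishes on $\HH$. Adding up, $\sum_{j=-\infty}^{\infty}|Q_j z + Q_{j-1}|^{-m} < \infty$, i.e.\ $Q_m(z)$ converges absolutely; since the bounds are uniform on compact subsets of $\HH$, the convergence is in fact locally uniform there. (The argument uses only $m>0$; the hypothesis $m>1$, natural for classical lattice sums, is more than enough once the growth of the denominators is geometric rather than polynomial.)

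The two comparison estimates are routine, so I do not expect a serious obstacle. The one place that warrants care is the $j \to -\infty$ regime: there the negative-index Pell--Lucas numbers blow up in modulus while alternating in sign, so one must correctly identify which of the two Binet terms dominates and check that the lower bound for $|Q_j z + Q_{j-1}|$ is a genuine positive constant times $\delta^{|j|}$ and not something that might degenerate. Once the factorisation above is in hand this is immediate, and the whole proof rests on the two facts $\delta > 1$ and $\delta z + 1,\ \bar\delta z + 1 \neq 0$, valid on the domain because it stays away from the real points $\bar\delta$ and $\delta$.
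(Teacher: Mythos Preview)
Your argument is correct and considerably more detailed than the paper's. The paper disposes of the convergence in one line: it invokes the Weierstrass $M$-test and asserts that the series can be bounded by $\zeta(m)$, without spelling out the comparison. Your approach is genuinely different in that you use the Binet closed form $Q_n=\delta^{\,n}+\bar\delta^{\,n}$ to factor the general term and then compare each tail with an explicit geometric series in $\delta^{-m|j|}$. What this buys you is (a) a rigorous identification of the natural domain as the complement of the two real points $\delta,\bar\delta$, (b) locally uniform (not just pointwise) convergence on that domain, and (c) the observation that convergence actually holds for all $m>0$, since the denominators grow geometrically rather than polynomially. The paper's $\zeta(m)$ bound, by contrast, is terser but leaves the reader to reconstruct why $|Q_j z+Q_{j-1}|$ eventually dominates $|j|$, and it genuinely needs $m>1$ for $\zeta(m)$ to converge even though the series itself does not.
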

\begin{proof}
By Weierstrass criterion, it is easy to see that we can bound $Q_m(z)$ by $\zeta(m)$ and the theorem follows. 
\end{proof}

After a short calculation, one can get the following well-known fact.
\begin{thm}\cite{5}
Let $Q_n$ be the $n$th-Pell-Lucas number. Then $$ Q_{-n}=(-1)^n Q_n .$$
\end{thm}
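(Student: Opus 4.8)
The plan is to first make precise what $Q_j$ means for negative indices $j$ — this is implicitly needed already for the definition of $Q_m(z)$ as a sum over all $j\in\ZZ$ — by stipulating that the recurrence $Q_n=2Q_{n-1}+Q_{n-2}$ holds for \emph{every} $n\in\ZZ$; the claimed identity then falls out either from a two-sided induction or, more cleanly, from the closed (Binet) form of the sequence. I would present the Binet route as the main argument.

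First I would record the closed form. The characteristic polynomial of the recurrence is $x^2-2x-1$, with roots $\alpha=1+\sqrt2$ and $\beta=1-\sqrt2$. Since $\alpha^0+\beta^0=2=Q_0$ and $\alpha+\beta=2=Q_1$, an immediate check of the two initial conditions gives $Q_n=\alpha^n+\beta^n$ for all $n\ge 0$, and this formula consistently extends $(Q_n)$ to all $n\in\ZZ$ in agreement with the two-sided recurrence. The one arithmetic fact that does the work is $\alpha\beta=-1$, equivalently $\alpha^{-1}=-\beta$ and $\beta^{-1}=-\alpha$. With this in hand the computation is one line:
$$ Q_{-n}=\alpha^{-n}+\beta^{-n}=(-\beta)^{n}+(-\alpha)^{n}=(-1)^{n}\bigl(\alpha^{n}+\beta^{n}\bigr)=(-1)^{n}Q_n. $$

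For readers preferring to stay purely within recurrence manipulations (which is the stated spirit of the note), I would alternatively give a two-sided induction on $n\ge 0$. The base cases are trivial: $Q_0=2=(-1)^0Q_0$, and reading $Q_1=2Q_0+Q_{-1}$ backwards gives $Q_{-1}=-2=(-1)^1Q_1$. For the inductive step, evaluating the recurrence at index $1-n$ and rearranging yields $Q_{-(n+1)}=Q_{-(n-1)}-2Q_{-n}$; substituting the inductive hypotheses $Q_{-(n-1)}=(-1)^{n-1}Q_{n-1}$ and $Q_{-n}=(-1)^{n}Q_n$ gives $Q_{-(n+1)}=(-1)^{n-1}\bigl(Q_{n-1}+2Q_n\bigr)=(-1)^{n+1}Q_{n+1}$ by the forward recurrence $Q_{n+1}=2Q_n+Q_{n-1}$. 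There is no genuine obstacle in either approach; the only point deserving an explicit word is the very first one, namely fixing the convention that extends the sequence to negative indices, after which both arguments are entirely routine.
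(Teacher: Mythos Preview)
Your argument is correct; both the Binet-formula route and the two-sided induction are valid, and your remark about first fixing the two-sided extension of the recurrence is well placed. Note, however, that the paper does not actually supply a proof of this statement: it only says that ``after a short calculation, one can get the following well-known fact'' and attributes it to \cite{5}. Your Binet computation is precisely such a short calculation, so it is entirely in the spirit of what the paper indicates, and there is nothing further to compare.
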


Now we are ready to state the main contribution of the paper:
\begin{thm}\label{main-thm}
The Pell-Lucas-Eisenstein series are satisfying the following functional equations:
\begin{enumerate}[(i)]
\item $Q_{2k}(\frac{-1}{z})=z^{2k}Q_{2k}(z)$
\item $Q_{2k}(2-z)=Q_{2k}(z)$
\item $Q_{2k}(z+2)=z^{-2k}Q_{2k}(\frac{1}{z})$
\item $Q_{2k}(-z) = z^{-2k} Q_{2k}(\frac{1}{z})$

\end{enumerate}
\end{thm}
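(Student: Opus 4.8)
The plan is to deduce all four identities from three facts: the series defining $Q_{2k}(z)$ converges absolutely on its domain (so we may reindex termwise), the Pell--Lucas recursion $Q_{j+1}=2Q_j+Q_{j-1}$ holds for every $j\in\ZZ$ (by back-substitution it also defines $Q_j$ for $j<0$), and the parity relation $Q_{-n}=(-1)^nQ_n$. Evenness of the weight enters decisively: each reindexing pushes a factor $(-1)^j$ into a summand $(Q_jz+Q_{j-1})^{-m}$, and this sign is annihilated precisely because the exponent $-2k$ is even. As a preliminary I would put the right-hand side of (iii) and (iv) into a convenient form by clearing denominators:
$$ z^{-2k}Q_{2k}(1/z)=z^{-2k}\sum_{j\in\ZZ}\Bigl(\tfrac{Q_j+Q_{j-1}z}{z}\Bigr)^{-2k}=\sum_{j\in\ZZ}(Q_{j-1}z+Q_j)^{-2k}. $$

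It helps to notice that the four assertions are interdependent: substituting $z\mapsto 1/z$ in (iv) gives (i), and (iii) combined with (iv) gives $Q_{2k}(z+2)=Q_{2k}(-z)$, from which $z\mapsto-z$ yields (ii). Hence it is enough to establish (iii) and (iv). For (iii): the recursion gives $Q_j(z+2)+Q_{j-1}=Q_jz+(2Q_j+Q_{j-1})=Q_jz+Q_{j+1}$, so $Q_{2k}(z+2)=\sum_j(Q_jz+Q_{j+1})^{-2k}$; the shift $j\mapsto j-1$ turns this into $\sum_j(Q_{j-1}z+Q_j)^{-2k}$, which by the display above is $z^{-2k}Q_{2k}(1/z)$. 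For (iv): $Q_j(-z)+Q_{j-1}=Q_{j-1}-Q_jz$, so $Q_{2k}(-z)=\sum_j(Q_{j-1}-Q_jz)^{-2k}$; replacing the index $j$ by $1-j$ and using $Q_{-j}=(-1)^jQ_j$ and $Q_{1-j}=-(-1)^jQ_{j-1}$, the summand becomes $\bigl((-1)^j(Q_{j-1}z+Q_j)\bigr)^{-2k}=(Q_{j-1}z+Q_j)^{-2k}$, and summing over $j$ again gives $z^{-2k}Q_{2k}(1/z)$. (If one prefers direct proofs of (i) and (ii), the same recipe works: for (ii), $Q_j(2-z)+Q_{j-1}=Q_{j+1}-Q_jz$, so the summand is $(Q_jz-Q_{j+1})^{-2k}$, and $j\mapsto-j$ with the parity relation converts it to $(Q_jz+Q_{j-1})^{-2k}$; for (i), $Q_j(-1/z)+Q_{j-1}=(Q_{j-1}z-Q_j)/z$, so one factors out $z^{2k}$ and then applies $j\mapsto 1-j$.)

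The calculations are routine; what needs a little attention rather than real effort is, first, checking that each of the Möbius maps $z\mapsto-1/z,\,2-z,\,z+2,\,-z$ sends the domain of $Q_{2k}$ — the complement of the poles $z=-Q_{j-1}/Q_j$ — into itself, so that the two sides of each identity are simultaneously defined (for instance $-1/(-Q_{j-1}/Q_j)=-Q_{-j}/Q_{1-j}$ is again one of these poles, with the convention that $z=0$ in (i) is handled by the obvious limit), and second, observing that the reindexings are permissible only because the series converges absolutely. I expect the one genuine, if mild, obstacle to be the sign bookkeeping in (i), (ii) and (iv): one must follow the parity $(-1)^j$ of the shifted index through each substitution and confirm that it always enters to an even power — which is exactly why the functional equations are stated for even weight $2k$ and would fail for general $m$. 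Nothing from the theory of modular forms is required.
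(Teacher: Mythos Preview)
Your proof is correct and rests on the same three ingredients the paper uses --- the Pell--Lucas recursion, the parity relation $Q_{-n}=(-1)^nQ_n$, and termwise reindexing justified by absolute convergence --- but the organisation is tighter. The paper proves each of (i)--(iv) in isolation: for (i) it writes out several consecutive terms of the two series explicitly and matches them by inspection; for (iii) and (iv) it splits the sum as $Q_m=Q_m^++Q_m^-$ over $j\ge1$ and $j\le0$, proves a version of the identity for each half that carries an extra boundary term $\pm(2/z+2)^{-2k}$, and then adds so that the boundary terms cancel. You avoid both the term-by-term expansion and the $\pm$ split by reindexing over all of $\ZZ$ at once, and you further notice that (i) follows from (iv) by $z\mapsto1/z$ and (ii) from (iii) and (iv) by $z\mapsto-z$, so only two of the four need a direct argument. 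The payoff is purely in economy --- no new idea enters, but the correction-term bookkeeping disappears and the role of the even weight is made explicit rather than buried in the expansion.
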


\section{Proof of the Main Result}
In this section, we will give proof of the main result, namely Theorem \ref{main-thm}. It will be based on direct calculation.

\begin{proof}
\begin{enumerate}[(i)]
\item Let $Q_{2k}$ be the Pell-Lucas-Eisenstein series of weight $2k$. Then, it is clear that

\[ Q_{2k}\left(\frac{-1}{z}\right) = \sum_{j=-\infty}^{\infty} \left( Q_j \left( \frac{-1}{z} \right) +Q_{j-1}\right)^{-2k}. \]

By expanding the series, we have

\begin{eqnarray*}
Q_{2k}\left(\frac{-1}{z}\right) &=& \dots + \left(\frac{14}{z}+34\right)^{-2k} + \left(\frac{-6}{z}-14\right)^{-2k} + \left(\frac{2}{z}+6\right)^{-2k}  + \\&&  \left(\frac{-2}{z}-2 \right)^{-2k} +\left(\frac{-2}{z}+2 \right)^{-2k}  + \left(-\frac{6}{z}+2 \right)^{-2k}  + \left(-\frac{14}{z}+6\right)^{-2k} + \dots
\end{eqnarray*}

By equating the denominator and taking common brackets, we get

\begin{eqnarray*}  Q_{2k}\left(\frac{-1}{z}\right) &=& z^{2k}[\dots + (34z+14)^{-2k} + (-14z-6)^{-2k} + \\&& (6z+2)^{-2k} + (-2z-2)^{-2k}  + (2z-2)^{-2k} \\&&   + (2z-6)^{-2k} + (6z-14)^{-2k} +  \dots] =z^{2k}Q_{2k}(z)  \end{eqnarray*}

as desired.

\item By the definition of the Pell-Eisenstein series, it is clear that 

\[Q_{2k}(2-z) = \sum_{j=-\infty}^{\infty} (Q_j(2-z)+Q_{j-1})^{-2k} =  \sum_{j=-\infty}^{\infty} (Q_j(z)+Q_{j-1})^{-2k} = Q_{2k}(z). \]

\item Let us define

\[ Q_m(z)=Q_m^- (z)+ Q_m^+(z)\]

where $Q_m^-(z):=\sum_{-\infty < n \leq 0 } (Q_n z + Q_{n-1})^{-m}$ and $Q_m^+(z):=\sum_{1 \leq  n \le \infty } (Q_n z + Q_{n-1})^{-m}$.

It is clear that 
\begin{eqnarray*} Q_{2k}^+(z+2) &=& \sum_{n \geq 1} (Q_n(z+2) + Q_{n-1})^{-2k}\\&=& \sum_{n \geq 1} (Q_n z + 2Q_n + Q_{n-1})^{-2k}
 =\sum_{n \geq 1} (Q_n z + Q_{n+1})^{-2k} .\end{eqnarray*} 

By expanding the series and multiplying by $z^{2k}$ both sides, we have

$$\sum_{n \geq 1} (Q_n z + Q_{n+1})^{-2k}=  \sum_{n \geq 1} (Q_n (\frac{1}{z})+Q_{n-1})^{-2k},$$

hence we get

$$Q_{2k}^+(z+2)= z^{-2k} Q_{2k}^+(\frac{1}{z})- (\frac{2}{z}+2)^{-2k}  .$$ 

Similarly, we have $Q_{2k}^-(z+2)= z^{-2k} Q_{2k}^-(\frac{1}{z})+ (\frac{2}{z}+2)^{-2k}  $.

Finally, by combining two results above, we find

$$Q_{2k}(z+2) = z^{-2k} Q_{2k} (\frac{1}{z}),$$ as desired.

\item Assume the setting above and let us consider

$$Q_{2k}^-(-z)=\sum_{n \leq 0}(Q_n(-z)+Q_{n-1})^{-2k} $$

If we expand the series and afterwards multiplying by $z^{2k}$ then we can easily conclude that

$$Q_{2k}^-(-z)=z^{-2k}Q_{2k}^+(\frac{1}{z}).$$

Exactly the same arguments are valid for the positive part and we get 

$$Q_{2k}^+(-z)=z^{-2k}Q_{2k}^-(\frac{1}{z})$$

The last step will be

$$ Q_{2k}^{-}(-z)+ Q_{2k}^{+}(-z) = z^{-2k} ( Q_{2k}^{+}(1/z)+ Q_{2k}^{-}(1/z) ) $$

and the result follows.

\end{enumerate}
\end{proof}

\section{Conclusion Remarks and Future Work}
In this short note, we introduce the Pell-Lucas-Eisenstein series and study their basic properties. This work gives a positive answer to the theoretical question in \cite{1} about the existence of classes of special functions in another direction. Other arithmetic properties of the Pell-Lucas-Eisenstein series is an open problem for the moment.

\end{document}